%
%
%
%
\documentclass{amsart}

\newtheorem{theorem}{Theorem}[section]
\newtheorem{lemma}[theorem]{Lemma}
\newtheorem{proposition}[theorem]{Proposition}
\newtheorem{corollary}[theorem]{Corollary}
\newtheorem{claim}[theorem]{Claim}

\theoremstyle{definition}

\theoremstyle{remark}
\newtheorem{remark}[theorem]{Remark}

\numberwithin{equation}{section}



\begin{document}

\title{ Four-dimensional Einstein manifolds with sectional curvature bounded from above}

\author{Zhuhong Zhang}
\address{Department of Mathematics, South China Normal Univeristy, Guangzhou, P. R. China 510275}
\email{juhoncheung@sina.com}





\keywords{Einstein manifold, Ricci flow, sectional curvature, curvature pinching estimate}

\begin{abstract}
Given an Einstein structure with positive scalar curvature on a  four-dimensional Riemannian manifolds, that is $Ric=\lambda g$ for some positive constant $\lambda$. For convenience, the Ricci curvature is always normalized to $Ric=1$. A basic problem is to classify four-dimensional Einstein manifolds with positive or nonnegative curvature and $Ric=1$. In this paper, we firstly show that if the sectional curvature satisfies $K\le M_1= \frac{\sqrt{3}}2\approx 0.866025$, then the sectional curvature will be nonnegative. 
Next, we prove a family of rigidity theorems of Einstein four-manifolds with nonnegative sectional curvature, and satisfies $K_{ik}+sK_{ij}\ge K_s=\frac{1+\sqrt{2}}3 - \frac{\sqrt{4+2\sqrt{2}}}4+\frac{2-\sqrt{2}}6 s$ for every orthonormal basis $\{e_i\}$ with $K_{ik}\ge K_{ij}$, where $s$ is any nonnegative constant.
Indeed, we will show that  these Einstein manifolds must be isometric either $S^4$, $RP^4$ or $CP^2$ with standard metrics. 
As a corollary, we give a rigidity result of Einstein four-manifolds with $Ric=1$, 
and the sectional curvature satisfies $K \le M_2=\frac {2-\sqrt{2}}6+\frac{\sqrt{4+2\sqrt{2}}}4 \approx 0.750912$. 
\end{abstract}

\maketitle



\section{Introduction}

A Riemannian manifold $(M, g)$ is called Einstein if the Ricci curvature satisfies the Einstein equation $$Ric=\lambda g$$ for some constant $\lambda$.
In differential geometric, a basic problem is to classify Einstein manifolds with positive or nonnegative sectional curvature in the category of either topology, diffeomorphism, or isometry.
By the work of Berger\cite{Be}, four-dimensional Einstein manifold with nonnegative sectional curvature and positive scalar curvature must have its Euler characteristic $\chi(M)$ bounded by $1\le \chi(M)\le 9$. Furthermore, Hitchin \cite{Hi} had shown that $|\tau(M)|\le (\frac 23)^{3/2}\chi(M)$, where $\tau(M)$ is the signature. Later, Gursky and Lebrun \cite{GL} improved this result, showed that $\chi(M)>\frac{15}4|\tau(M)|$ if the manifold is not half conformally flat. 

These results suggest that there are few four-dimensional manifolds can carry Einstein structure with nonnegative sectional curvature and positive scalar curvatur. Indeed,
Up to now, the only known examples of oriented four-dimensional Einstein manifolds with nonnegative sectional curvature and positive scalar curvature are the sphere $S^4$,  the product of 2-spheres $S^2\times S^2$, and the complex projective space $CP^2$.

In the isometric category, Hitchin's classification theorem (see \cite{Besse} Theorem 13.30) states that half conformally flat Einstein four-manifolds with positive scalar curvature are isometric to either $S^4$ or $CP^2$. If Einstein manifolds have positive curvature, Tachibana\cite{Ta} proved that Einstein manifolds with positive curvature operator are space forms. Recently, Brandle\cite{Br} improved this result and showed that Einstein manifolds with positive isotropy curvature are space forms, and with nonnegative isotropy curvature are locally symmetric.

On the other hand, If Einstein manifolds have positive or nonnegative sectional curvature, Gursky and LeBrun \cite{GL} showed that compact Einstein four-manifolds of nonnegative sectional curvature and positive intersection form are $CP^2$.
While Yang \cite{Yang} considered Einstein four-manifold with $Ric=1$, and with nonnegative sectional curvature. If the sectional curvature further satisfies condition $(a)$: $K\ge (\sqrt{1249}-23)/120 \approx 0.102843$ or condition $(b)$: $2K_{ik}+K_{ij}\ge \frac 9{14}$ for every orthonormal basis $\{e_i\}$ with $K_{ik}\ge K_{ij}$, then it must be isometric either $S^4$, $RP^4$ or $CP^2$ with standard metrics. Later, Costa \cite{Co} improve Yang's result, showed that if $K\ge \epsilon_0 = (2-\sqrt{2})/6 \approx 0.097631$, then Yang's result remains true.

In this paper, we will consider Einstein four-manifolds with sectional curvature bounded from above by a constant less than $1$.  
Firstly, we obtain a important observation of Einstein four-manifolds as follow.

\begin{theorem}
Let $(M, g)$ be a complete four-dimensional Einstein manifold with $Ric=1$, and the sectional curvature satisfies
$$K \le M_1  = \frac{\sqrt{3}}2\approx 0.866025.$$ 
Then, $(M, g)$ must have nonnegative sectional curvature.
\end{theorem}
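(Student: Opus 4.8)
The plan is to translate the pointwise hypothesis and conclusion into statements about the eigenvalues of the self-dual and anti-self-dual Weyl operators, and then to upgrade the resulting (insufficient) pointwise estimate to the sharp threshold $\frac{\sqrt 3}{2}$ by a Weitzenb\"ock/maximum-principle argument. Since $Ric=g>0$, Myers' theorem shows that $(M,g)$ is compact, so every continuous curvature function attains its extrema; this is what makes a global argument available.

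First I would fix an oriented orthonormal coframe and use the bundle splitting $\Lambda^2=\Lambda^+\oplus\Lambda^-$. For an Einstein metric the curvature operator is block diagonal, $\mathcal R=(W^++\tfrac{R}{12})\oplus(W^-+\tfrac{R}{12})$, where $W^\pm$ are trace-free symmetric on the rank-$3$ bundles $\Lambda^\pm$ and $R=4$. A decomposable unit $2$-form (equivalently, a tangent plane) corresponds to a pair $(\xi,\eta)$ of unit elements of $\Lambda^+,\Lambda^-$ with equal norms, and every such pair occurs; hence $K_{\max}=\tfrac13+\tfrac12(w^+_3+w^-_3)$ and $K_{\min}=\tfrac13+\tfrac12(w^+_1+w^-_1)$, where $w^\pm_1\le w^\pm_2\le w^\pm_3$ are the eigenvalues of $W^\pm$. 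The hypothesis then reads $w^+_3+w^-_3\le\sqrt3-\tfrac23$ and the goal is $w^+_1+w^-_1\ge-\tfrac23$.

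Using only trace-freeness, $w^\pm_1\ge-2w^\pm_3$, one obtains the implication $K\le\tfrac12\Rightarrow K\ge0$, which falls short of $\frac{\sqrt3}{2}$; the gap must be closed globally. Here I would invoke the Derdzinski--Weitzenb\"ock identity for Einstein four-manifolds, $\Delta|W^+|^2=2|\nabla W^+|^2+R|W^+|^2-36\det W^+$ (and the analogous identity for $W^-$), whose cubic term $-36\det W^+$ is precisely the $\mathcal R^2+\mathcal R^\#$ reaction term of the Ricci flow restricted to the $\Lambda^+$-block $\mathfrak{so}(3)$. Evaluating at a point where $|W^+|^2$ is maximal forces $\Delta|W^+|^2\le0$, hence $36\det W^+\ge R|W^+|^2>0$; positivity of $\det W^+$ together with vanishing trace forces two negative and one positive eigenvalue and, quantitatively, controls how negative $w^+_1$ can be relative to $w^+_3$. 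Combining this determinant lower bound with the upper bound $w^+_3+w^-_3\le\sqrt3-\tfrac23$ and optimizing the eigenvalue configuration should yield exactly $w^+_1+w^-_1\ge-\tfrac23$, that is $K_{\min}\ge0$.

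The main obstacle I anticipate is that the point realizing the most negative sectional curvature need not coincide with the point where $|W^+|$ (or $|W^-|$) is maximal, so the determinant estimate and the eigenvalue inequality are a priori evaluated at different points. To overcome this I would either apply the maximum principle to a single scalar built symmetrically from both factors (for instance $|W^+|^2+|W^-|^2$, or the lowest-eigenvalue function of $\mathcal R$ in the barrier sense), or derive an integral Bochner inequality coupling the two blocks, so that the cubic determinant control is available precisely where it is needed. Identifying the extremal configuration that saturates the estimate, and verifying that the resulting constant is exactly $\frac{\sqrt3}{2}$ rather than some weaker value, is the delicate quantitative step.
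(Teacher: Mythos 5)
Your setup is sound and matches the paper's: the block decomposition of the curvature operator over $\Lambda^+\oplus\Lambda^-$, the identification of sectional curvatures with sums of eigenvalues from the two blocks (the paper's $K_{12}=\tfrac14(a_1+c_1)$ is exactly your $K_{\min}=\tfrac13+\tfrac12(w_1^++w_1^-)$ after subtracting the scalar part), and the correct observation that pointwise trace-freeness alone only yields the implication $K\le\tfrac12\Rightarrow K\ge0$. But the proposal stops precisely where the proof has to begin. The entire content of the theorem is the mechanism that upgrades $\tfrac12$ to $\tfrac{\sqrt3}2$, and what you offer there is a plan with an acknowledged hole: the Derdzinski--Weitzenb\"ock inequality $36\det W^+\ge R|W^+|^2$ holds at the maximum point of $|W^+|^2$, while the eigenvalue bound is needed at the (a priori different) point where $K_{12}$ is most negative, and moreover it constrains $W^+$ and $W^-$ at two \emph{separate} maximum points. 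You name this obstacle yourself and propose only generic remedies (a symmetrized scalar, an integral Bochner inequality) without verifying that any of them produces the constant $\tfrac{\sqrt3}2$ rather than something weaker. As written, the "delicate quantitative step" you defer is the theorem.

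The paper closes this gap by an entirely different global mechanism. It works with the quantity $I=(c_2-c_1)c_3+(c_3-c_1)c_2+(a_2-a_1)a_3+(a_3-a_1)a_2$, which is the excess of the Ricci-flow ODE reaction term for $a_1+c_1$ over that of $\kappa R$, and Lemma~2.2 is a purely pointwise, case-by-case algebraic estimate (using Berger's normal form, the first Bianchi identity, and the bounds $|R_{1423}-R_{1342}|\le K_{14}-K_{13}$ etc.) showing that $I\ge c(\epsilon)>0$ wherever $K_{12}\le-\epsilon$ and $K\le\tfrac{\sqrt3}2$ --- this is where the constant $\tfrac{\sqrt3}2$ actually enters, via $-4K_{14}^2+3\ge0$ in Case~3. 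Hamilton's advanced maximum principle then shows the set $a_1+c_1\ge(\kappa+\delta t)R$ is preserved by the flow, which contradicts the fact that an Einstein metric evolves only by scaling, so the pinching ratio $\kappa=\min K/R$ cannot strictly improve. Note that this argument needs no second derivatives of the Weyl tensor and no coincidence of extremal points: the estimate on $I$ is valid at \emph{every} point where the sectional curvature is sufficiently negative. If you want to salvage your Weitzenb\"ock route you would have to produce the analogue of Lemma~2.2 --- a pointwise inequality valid at the relevant point --- and there is no evidence in your sketch that the determinant bound at the maximum of $|W^\pm|^2$ can be transported there.
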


In the next part, we consider the rigidity of Einstein four-manifolds with nonnegative sectional curvature. 

\begin{theorem}
For any constant $s \ge 0$, let $K_s=\frac{1+\sqrt{2}}3 - \frac{\sqrt{4+2\sqrt{2}}}4+\frac{2-\sqrt{2}}6 s$, and we have the following property.
Suppose $(M, g)$ be a complete four-dimensional Einstein manifold with nonnegative sectional curvature, and $Ric=1$.
If we further assume that 
$$K_{ik}+sK_{ij}\ge K_s$$
 for every orthonormal  basis $\{e_i\}\subset T_o M$, which satisfies $K_{ik}\ge K_{ij}$.
Then, $(M, g)$ must be isometry to either the Euclidean sphere $S^4$, the real projective space $RP^4$ with constant sectional curvature $K=\frac 13$, or the complex projective space $CP^2$ with the normalized Fubini-Study metric.
\end{theorem}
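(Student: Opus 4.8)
The plan is to exploit the special block structure of the curvature operator of an oriented Einstein four-manifold together with the Weitzenböck formula for the Weyl tensor, reducing the rigidity statement to a sharp algebraic inequality calibrated by the constant $K_s$. Since $Ric=1>0$ and $(M,g)$ is complete, Bonnet--Myers shows that $M$ is compact (passing to the oriented double cover if necessary, which only distinguishes $S^4$ from $RP^4$ and leaves $CP^2$ untouched), and normalizing $Ric=1$ fixes the scalar curvature $R=4$. On an oriented Einstein four-manifold the curvature operator acting on $\Lambda^2=\Lambda^+\oplus\Lambda^-$ is block diagonal, $\mathcal R=\bigl(\tfrac{R}{12}\mathrm{Id}+W^+\bigr)\oplus\bigl(\tfrac{R}{12}\mathrm{Id}+W^-\bigr)$, because the off-diagonal block is the trace-free Ricci tensor, which vanishes. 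I denote by $a_1\le a_2\le a_3$ the eigenvalues of the self-dual part $W^+$ and by $c_1\le c_2\le c_3$ those of $W^-$, each triple being trace free. Every sectional curvature is then $K(\sigma)=\tfrac{R}{12}+\tfrac12\bigl(\langle W^+\hat\omega^+,\hat\omega^+\rangle+\langle W^-\hat\omega^-,\hat\omega^-\rangle\bigr)$ for unit vectors $\hat\omega^\pm\in\Lambda^\pm$ determined by $\sigma$, so the hypotheses $K\ge0$ and $K_{ik}+sK_{ij}\ge K_s$ become constraints on these six eigenvalues.

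The technical heart is a sharp algebraic lemma: at every point the pinching hypothesis forces
$$\tfrac{R}{2}\,\abs{W^+}^2\le 18\det W^+\quad\text{and}\quad \tfrac{R}{2}\,\abs{W^-}^2\le 18\det W^-,$$
where $\abs{W^+}^2=a_1^2+a_2^2+a_3^2$ and $\det W^+=a_1a_2a_3$. After expressing $K_{ij},K_{ik}$ in the eigenvalues and optimizing over all admissible orthonormal frames (the extremal frames being those adapted to the eigenspaces of $W^\pm$), the requirement $K_{ik}+sK_{ij}\ge K_s$ for all $s\ge0$ reduces to a system of linear inequalities on $(a_i)$ and $(c_i)$; the constant $K_s$ is precisely chosen so that this system implies the cubic inequalities above, with equality holding exactly at the configuration $a=\tfrac{R}{12}(-1,-1,2)=(-\tfrac13,-\tfrac13,\tfrac23)$, $W^-=0$, of $CP^2$. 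Verifying this implication and its sharpness — a constrained optimization whose extremum is the Fubini--Study curvature — is the main obstacle, and is exactly what pins down the value of $K_s$.

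Next I invoke Derdziński's Weitzenböck identity for Einstein four-manifolds, $\nabla^*\nabla W^+=\tfrac{R}{2}W^+-6\,(W^+)^2_0$, whose integrated form reads $\int_M\abs{\nabla W^+}^2=\int_M\bigl(\tfrac{R}{2}\abs{W^+}^2-18\det W^+\bigr)$, using $\mathrm{tr}\,(W^+)^3=3\det W^+$. Combining this with the pointwise inequality from the lemma forces $\nabla W^+\equiv0$ together with $\tfrac{R}{2}\abs{W^+}^2=18\det W^+$ everywhere, and identically for $W^-$. Equivalently, one may run this step through the Ricci flow: the pinching region is an $O(4)$-invariant set preserved by Hamilton's reaction ODE $\tfrac{d}{dt}\mathcal R=\mathcal R^2+\mathcal R^{\#}$, and the strong maximum principle yields the same parallelism.

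Finally, because the Einstein condition gives $\nabla Ric=0$ and constant scalar curvature, the vanishing $\nabla W^+=\nabla W^-=0$ upgrades to $\nabla\mathrm{Rm}=0$, so $(M,g)$ is locally symmetric; its universal cover is a simply connected symmetric Einstein four-manifold with $R=4>0$, hence one of $S^4$, $CP^2$ or $S^2\times S^2$. The pinching excludes $S^2\times S^2$, since a mixed plane there has sectional curvature $0<K_s$, contradicting $K_{ik}+sK_{ij}\ge K_s$. The pointwise equality case then leaves two possibilities: $W^+=W^-=0$, giving constant curvature $K=\tfrac13$ and hence $S^4$ or its only free quotient $RP^4$; or the $CP^2$ eigenvalue configuration with the opposite Weyl part vanishing, giving $CP^2$ with the normalized Fubini--Study metric. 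This exhausts the conclusion.
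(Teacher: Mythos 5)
Your proposal follows the classical Weitzenb\"ock route (Gursky--LeBrun, Yang, Costa) rather than the paper's argument, which never touches $\nabla W^{\pm}$: the paper proves (Lemma 4.1) a lower bound on the quadratic $I=(c_2-c_1)c_3+(c_3-c_1)c_2+(a_2-a_1)a_3+(a_3-a_1)a_2$ whenever $\min K\le \epsilon_0-\epsilon$ with $\epsilon_0=\frac{2-\sqrt2}{6}$, feeds this into the ODE $\frac{d}{dt}(a_1+c_1)=(a_1+c_1)\frac R2+I$ for the Ricci flow reaction equation, and uses the fact that an Einstein metric evolves only by scaling to force $\min K\ge\epsilon_0$; the conclusion is then read off from Costa's theorem. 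A different route is in principle welcome, but yours has a fatal problem at exactly the step you flag as ``the main obstacle.''

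The pointwise lemma you assert, $\tfrac R2\abs{W^+}^2\le 18\det W^+$ at every point, is not merely unproved --- it is false under the stated hypotheses, for a homogeneity reason: the left side is quadratic in $W^+$ and the right side is cubic, so the inequality fails for every sufficiently small nonzero $W^+$. Concretely, the algebraic curvature tensor with $R=4$, $Ric=g$, $W^-=0$ and $W^+=t\,\mathrm{diag}(-1,-1,2)$ for $0<t<\tfrac13$ has all sectional curvatures in $[\tfrac13-\tfrac t2,\,\tfrac13+t]$, hence satisfies $K\ge0$ and $K_{ik}+sK_{ij}\ge(1+s)(\tfrac13-\tfrac t2)\ge K_s$ for small $t$ and all $s\ge0$; yet $\tfrac R2\abs{W^+}^2=12t^2>36t^3=18\det W^+$. (Taking $W^+=t\,\mathrm{diag}(1,1,-2)$ even makes $\det W^+<0$.) So no pointwise algebraic implication of the form you need can exist, and the Weitzenb\"ock step cannot close as written. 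The inequality that actually has a chance under a lower sectional curvature bound is the reverse one, $R\abs{W^+}^2\ge 36\det W^+$ (equality at the Fubini--Study configuration $t=\tfrac13$), which must then be paired with the integral identity carrying the opposite sign, $\int\abs{\nabla W^+}^2=\int(18\det W^+-\tfrac R2\abs{W^+}^2)$; at least one sign in your chain is therefore reversed, and in either arrangement the genuinely hard content --- deriving the eigenvalue inequality from the frame-wise condition $K_{ik}+sK_{ij}\ge K_s$ and identifying why $K_s$ is the right constant --- is asserted rather than proved. Until that constrained optimization is actually carried out (this is what Yang and Costa do for their constants, and what the paper replaces by Lemma 4.1 plus the Ricci-flow scaling argument), the proposal is an outline of a known strategy, not a proof.
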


Take $s=\frac 12$, then we have the following rigidity theorem, which generalize Yang's result \cite{Yang}.

\begin{corollary}
Let $(M, g)$ be a complete four-dimensional Einstein manifold with nonnegative sectional curvature, and $Ric=1$.
If we further assume that 
$$2K_{ik}+K_{ij}\ge  \frac{2+\sqrt{2}}2- \frac{\sqrt{4+2\sqrt{2}}}4\approx 0.400543 $$
 for every orthonormal  basis $\{e_i\}\subset T_o M$, which satisfies $K_{ik}\ge K_{ij}$.
Then, $(M, g)$ must be isometry to either the Euclidean sphere $S^4$, the real projective space $RP^4$ with constant sectional curvature $K=\frac 13$, or the complex projective space $CP^2$ with the normalized Fubini-Study metric.
\end{corollary}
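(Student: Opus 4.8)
The plan is to obtain the Corollary as the special case $s=\frac12$ of Theorem 1.2, so that no new geometric argument is needed and the entire task reduces to matching hypotheses. Recall that Theorem 1.2 yields exactly the stated trichotomy ($S^4$, $RP^4$ with $K=\frac13$, or $CP^2$ with the normalized Fubini--Study metric) for any complete four-dimensional Einstein manifold with $Ric=1$ and nonnegative sectional curvature, provided the pinching condition $K_{ik}+sK_{ij}\ge K_s$ holds for every orthonormal basis with $K_{ik}\ge K_{ij}$, where $K_s=\frac{1+\sqrt2}{3}-\frac{\sqrt{4+2\sqrt2}}{4}+\frac{2-\sqrt2}{6}s$. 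Thus I would first fix $s=\frac12$, record the hypothesis in the form $K_{ik}+\frac12 K_{ij}\ge K_{1/2}$, and note that the curvature and completeness assumptions of the Corollary coincide verbatim with those of Theorem 1.2.

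The second step is to put the pinching inequality into the homogeneous form displayed in the Corollary. Since the constraint $K_{ik}\ge K_{ij}$ is scale-invariant, multiplying $K_{ik}+\frac12 K_{ij}\ge K_{1/2}$ through by $2$ produces the equivalent inequality $2K_{ik}+K_{ij}\ge 2K_{1/2}$, imposed over the same family of orthonormal bases. Hence the only computation to perform is the evaluation of $2K_{1/2}$: substituting $s=\frac12$ gives $K_{1/2}=\frac{1+\sqrt2}{3}-\frac{\sqrt{4+2\sqrt2}}{4}+\frac{2-\sqrt2}{12}$, and therefore
$$2K_{1/2}=\frac{2+2\sqrt2}{3}+\frac{2-\sqrt2}{6}-\frac{\sqrt{4+2\sqrt2}}{2}.$$
Combining the first two terms over the common denominator $6$ gives $\frac{(4+4\sqrt2)+(2-\sqrt2)}{6}=\frac{6+3\sqrt2}{6}=\frac{2+\sqrt2}{2}$, so that $2K_{1/2}=\frac{2+\sqrt2}{2}-\frac{\sqrt{4+2\sqrt2}}{2}\approx 0.400543$, which is the constant appearing in the statement. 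With both hypotheses now identified, Theorem 1.2 applies and delivers the conclusion.

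I do not expect any genuine obstacle: the Corollary carries no content beyond Theorem 1.2, and the proof is a verification that the convenient normalization $s=\frac12$ collapses the parametrized bound $K_s$ into the quoted closed form. The only point deserving mild care is the bookkeeping of the two $\sqrt2$-terms, so that the coefficient $\frac{2+\sqrt2}{2}$ emerges correctly after clearing denominators; everything else is an immediate appeal to the already-established rigidity theorem.
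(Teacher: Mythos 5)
Your proposal is correct and is exactly the paper's own (one-line) derivation: take $s=\tfrac12$ in Theorem 1.2 and multiply the pinching inequality by $2$. One remark: your computed constant $2K_{1/2}=\frac{2+\sqrt{2}}{2}-\frac{\sqrt{4+2\sqrt{2}}}{2}$ agrees with the numerical value $0.400543$ quoted in the Corollary, so the denominator $4$ on the radical in the paper's displayed constant is evidently a typographical slip (and even read literally that hypothesis is stronger, so Theorem 1.2 still applies); your bookkeeping is the correct one.
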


\begin{remark}
In \cite{Yang}, Yang showed the rigidity theorem under the condition that  $2K_{ik}+K_{ij}\ge \frac 9{14}\approx 0.642857$. 
\end{remark}

Combine the above two theorems, we obtain the following rigidity theorem of Einstein four-dimensional with sectional curvature bounded from above.
\begin{theorem}
Let $(M, g)$ be a complete four-dimensional Einstein manifold with $Ric=1$, and the sectional curvature $K$ satisfies
$$K \le M_2=\frac {2-\sqrt{2}}6+\frac{\sqrt{4+2\sqrt{2}}}4 \approx 0.750912.$$ 
Then, $(M, g)$ must be isometry to either the Euclidean sphere $S^4$, the real projective space $RP^4$ with constant sectional curvature $K=\frac 13$, or the complex projective space $CP^2$ with the normalized Fubini-Study metric.
\end{theorem}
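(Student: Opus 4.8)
The plan is to obtain this statement as a straightforward consequence of Theorems 1.1 and 1.2; the only real content is to verify that the single hypothesis $K\le M_2$ simultaneously forces the nonnegativity assumption and the pinching assumption of Theorem 1.2 for the correct value of the parameter $s$.

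First I would record the numerical inequality $M_2\approx 0.750912 < M_1\approx 0.866025$. Thus $K\le M_2$ implies $K\le M_1$, and Theorem 1.1 immediately yields that $(M,g)$ has nonnegative sectional curvature. This secures the first standing hypothesis of Theorem 1.2.

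Next I would convert the upper bound into a lower bound on pairwise sums of sectional curvatures by means of the Einstein equation. Fix any orthonormal basis $\{e_1,e_2,e_3,e_4\}$ of $T_oM$, a distinguished index $i$, and two further indices $j,k$ with $K_{ik}\ge K_{ij}$; let $l$ be the remaining index. Taking the trace of $Ric=1$ in the direction $e_i$ gives
\[
K_{ij}+K_{ik}+K_{il}=Ric(e_i,e_i)=1,
\]
and since $K_{il}\le M_2$ we obtain
\[
K_{ik}+K_{ij}=1-K_{il}\ge 1-M_2 .
\]
The decisive arithmetic fact, for which the constant $M_2$ was tailored, is that $1-M_2$ equals the value of $K_s$ at $s=1$, which I denote $K_1$: the radical terms $\pm\frac{\sqrt{4+2\sqrt{2}}}{4}$ cancel and
\[
M_2+K_1=\frac{2-\sqrt{2}}{6}+\frac{1+\sqrt{2}}{3}+\frac{2-\sqrt{2}}{6}=\frac{(2-\sqrt{2})+(2+2\sqrt{2})+(2-\sqrt{2})}{6}=1 .
\]
Hence $K_{ik}+K_{ij}\ge K_1$ for every orthonormal basis and every admissible labeling with $K_{ik}\ge K_{ij}$, which is exactly the pinching hypothesis of Theorem 1.2 taken at $s=1$.

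Finally, having verified both hypotheses, I would apply Theorem 1.2 with $s=1$ to conclude that $(M,g)$ is isometric to $S^4$, $RP^4$, or $CP^2$ with their standard metrics. I expect no genuine obstacle here: the substance lies entirely in Theorems 1.1 and 1.2, which are invoked as black boxes, and the choice $s=1$ is forced because $1-M_2$ controls a sum of exactly two sectional curvatures. The only points demanding care are the identity $M_2+K_1=1$ and checking that the inequality points in the right direction.
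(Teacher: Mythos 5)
Your proposal is correct and follows essentially the same route as the paper: use $M_2<M_1$ and Theorem 1.1 to get nonnegative sectional curvature, then observe via $K_{ij}+K_{ik}+K_{il}=Ric(e_i,e_i)=1$ that $K\le M_2$ is equivalent to the $s=1$ pinching condition $K_{ik}+K_{ij}\ge K_1=1-M_2$, and conclude by the $s=1$ case of Theorem 1.2 (the paper merely re-runs the dichotomy from Theorem 1.2's proof rather than citing it as a black box). Your arithmetic check $M_2+K_1=1$ matches the paper's identification of the hypothesis with $s=1$ in Lemma 4.1.
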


\begin{remark}
In \cite{Co}, Costa showed a similar rigidity theorem under the condition that the sectional curvature satisfies $K \le \frac 23$. Our theorem can be considered as a generalization of Costa's result.
\end{remark}

A curial idea on the classification theorem of Einstein four-manifolds is the Weitzenb\"{o}ak formula. Usually one need to prove a Kato type inequality to get some curvature estimates for the self-dual and anti-dual Weyl tensor $W^{\pm}$ in terms of the length function $|W^{\pm}|$. 

However, our proof will base on the Ricci flow theory. In 1982, Hamilton \cite{Ha82} introduced Ricci flow to study compact three-manifolds with positive Ricci curvature. Later, Ricci flow theory have become a important tool in differential geometry. For example, by using the Ricci flow, we obtained the classification theorem of manifolds with positive curvature operator \cite{Ha86, BW}, Poincar$\acute{e}$ and geometrization conjecture \cite{P1, P2, CZ}, $1/4$-pinched differential sphere theorem \cite{BS1}, the classification theorem of manifolds with positive isotropy curvature \cite{Ha97, CZ05, CTZ}, etc.. 

In this paper, we will use  the advanced maximum principle to construct some pinching sets that invariant under the Ricci flow, which suggest that the curvature will become better and better along the Ricci flow. But since manifolds we consider are Einstein, the metric only change by scaling along the Ricci flow, so the initial metric must be good enough, and we get our rigidity theorem.

The rest of the paper is organized into five sections. In Section 2 , we introduce some basic facts about Einstein four-manifolds that will be used throughout the paper, and obtain a key estimate Lemma 2.2. We will introduce sone basic facts and ODE system about Ricci flow on four-manifolds in Section 3, and then we prove Theorem 1.1 by using Lemma 2.2. In Section 4 and Section 5, we will prove rigidity results Theorem 1.2 and Theorem 1.5, respectively. The arguments are base on the pinched estimate Claim 4.2, which will be proved by using a key estimate Lemma 4.1.

 {\bf Acknowledgements} The author was partially supported by NSFC 11301191.

\section{Preliminaries}

Let $(M, g)$ be a closed oriented four-dimensional Einstein manifold. The space $\Lambda^2_\pm$ of self-dual and anti-self dual 2-forms on $M$ are the eigenspaces of the eigenvalues $+1$ and $-1$ of  the Hodge star operator $\ast$ on 2-forms, respectively. This gives an orthogonal decomposition 
$$\Lambda^2=\Lambda^2_+\oplus\Lambda^2_-.$$
Furthermore, this decomposition induces  a block decomposition of the curvature operator matrix as 
$$
 M_{\alpha\beta}=   \left (
       \begin{array}{lll}
       A \ & B\\[1mm]
       ^tB \ & C \\[1mm]
       \end{array}
    \right),
$$
and  $B\equiv 0$ on $M$ since $M$ is Einstein. 

For any point $o\in M$, we will denote by $K(\pi)$ the sectional curvature of the plane $\pi\subset T_oM$.  It is well known that $$K(\pi)=K(\pi^{\bot}),$$
where $\pi^{\bot}\subset T_oM$ is the plane perpendicular to $\pi$. More precisely, by choosing a positive oriented orthonormal basis $\{e_i\}$ of $T_o M$, we have $K_{12}=K_{34}$, $K_{13}=K_{24}$ and $K_{14}=K_{23}$, where $K_{ij}$ is the sectional curvature of the plane spanned by $e_i, e_j$.

Let  $\{\theta_i\}$ be the dual orthonormal coframe, then a basis of $\Lambda^2_+$ is
$$\varphi_1=\theta_1\wedge \theta_2 + \theta_3 \wedge \theta_4, \quad \varphi_2=\theta_1\wedge \theta_3 + \theta_4 \wedge \theta_2,\quad \varphi_3=\theta_1\wedge \theta_4 + \theta_2 \wedge \theta_3,$$
while a basis for $\Lambda^2_-$ is
$$\psi_1=\theta_1\wedge \theta_2 - \theta_3 \wedge \theta_4,\quad \psi_2=-\theta_1\wedge \theta_3 + \theta_4 \wedge \theta_2,\quad \psi_3=\theta_1\wedge \theta_4 + \theta_2 \wedge \theta_3.$$

In order to give a representation more specific for the curvature operator matrix $A$ and $C$, we shall need the following lemma due to Berger \cite{Be}.

\begin{proposition}
Let $(M, g)$ be an oriented four-dimensional Einstein manifold. For a point $o\in M$, there exists a positive oriented orthonormal basis $\{e_i\}$ of $T_o M$, such that the curvature tensor $\{R_{ijkl}\}$ satisfies the following properties. \\
(1). $K_{12}=R_{1212}=\min\{K(\pi)| \pi\subset T_o M\}$.   \\
(2). $K_{14}=R_{1414}=\max\{K(\pi)| \pi\subset T_o M\}$.   \\
(3). $R_{ikjk}=0$ for all $i\neq j$. \\
(4). $|R_{1342}-R_{1234}|\le K_{13}-K_{12}$, $|R_{1423}-R_{1342}|\le K_{14}-K_{13}$, and $|R_{1423}-R_{1234}|\le K_{14}-K_{12}$. 
\end{proposition}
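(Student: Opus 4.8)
The plan is to exploit the self-dual/anti-self-dual formalism already set up, in which the Einstein condition gives $B\equiv 0$, so that the curvature operator splits as $\mathcal R=A\oplus C$ with $A$ acting on $\Lambda^2_+$ and $C$ on $\Lambda^2_-$, both symmetric. The organizing observation is that condition (3) is equivalent to the simultaneous diagonalizability of $A$ and $C$. Indeed, set $V_1=\mathrm{span}(\varphi_1,\psi_1)=\mathrm{span}(\theta_1\wedge\theta_2,\theta_3\wedge\theta_4)$, and likewise $V_2,V_3$ for the complementary pairs $(\theta_1\wedge\theta_3,\theta_2\wedge\theta_4)$ and $(\theta_1\wedge\theta_4,\theta_2\wedge\theta_3)$. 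Then $R_{ikjk}=0$ for all $i\neq j$ says exactly that $\mathcal R$ preserves each $V_a$; and since $\mathcal R\varphi_a\in\Lambda^2_+\cap V_a=\mathrm{span}(\varphi_a)$ and $\mathcal R\psi_a\in\Lambda^2_-\cap V_a=\mathrm{span}(\psi_a)$, this is the same as $A$ and $C$ being diagonal in the bases $\{\varphi_i\}$ and $\{\psi_i\}$.

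So first I would diagonalize. Because frame rotations act on $\Lambda^2_+\oplus\Lambda^2_-$ through the surjection $SO(4)\to SO(\Lambda^2_+)\times SO(\Lambda^2_-)=SO(3)\times SO(3)$, I can choose a positively oriented orthonormal frame $\{e_i\}$ whose induced action independently diagonalizes $A$ and $C$; by the reformulation above this yields assertion (3). Using the same surjectivity together with the discrete (Weyl-group-type) symmetries, I then arrange the eigenvalues in increasing order, $A_{11}\le A_{22}\le A_{33}$ and $C_{11}\le C_{22}\le C_{33}$, where $A_{11}=K_{12}+K_{34}+2R_{1234}$, $A_{22}=K_{13}+K_{24}+2R_{1342}$, $A_{33}=K_{14}+K_{23}+2R_{1423}$, and the $C_{aa}$ are the same expressions with the mixed terms negated.

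For (1) and (2) I would use that a unit simple $2$-vector $\omega=u\wedge v$ decomposes as $\omega=\omega_++\omega_-$ with $\abs{\omega_+}^2=\abs{\omega_-}^2=\tfrac12$, and conversely every such pair of equal-norm components is simple; hence $\omega_+$ and $\omega_-$ vary independently. Therefore $K(u,v)=\langle\mathcal R\omega_+,\omega_+\rangle+\langle\mathcal R\omega_-,\omega_-\rangle$ attains its global minimum (resp.\ maximum) when $\omega_\pm$ point along the smallest (resp.\ largest) eigendirections, and in the ordered frame these are realized by the coordinate planes $\theta_1\wedge\theta_2$ and $\theta_1\wedge\theta_4$. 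This gives $K_{12}=\tfrac14(A_{11}+C_{11})=\min K$ and $K_{14}=\tfrac14(A_{33}+C_{33})=\max K$, which are (1) and (2). Assertion (4) then falls out of the ordering: invoking $K_{12}=K_{34}$ and the first Bianchi identity $R_{1234}+R_{1342}+R_{1423}=0$, the dictionary gives $R_{1342}-R_{1234}=\tfrac14[(A_{22}-A_{11})-(C_{22}-C_{11})]$ and $K_{13}-K_{12}=\tfrac14[(A_{22}-A_{11})+(C_{22}-C_{11})]$, so the elementary bound $\abs{x-y}\le x+y$ for $x,y\ge0$ yields $\abs{R_{1342}-R_{1234}}\le K_{13}-K_{12}$; the remaining two inequalities are identical.

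The step I expect to be the main obstacle is the bookkeeping that a single positively oriented frame simultaneously realizes (1)--(3): one must verify that the independent sorting of the spectra of $A$ and $C$ is achievable by an orientation-preserving change of frame (this is where surjectivity onto $SO(3)\times SO(3)$ and the allowed discrete symmetries are essential, since naive rotations act on $\Lambda^2_+$ and $\Lambda^2_-$ in a correlated way), and that the adapted frame making $A,C$ diagonal is the same one in which the coordinate planes attain the global extrema. Once the frame is pinned down, (1), (2), (4) become immediate consequences of eigenvalue ordering together with the Bianchi identity.
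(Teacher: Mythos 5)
Your argument is correct, but it is worth noting that the paper itself gives no proof of Proposition 2.1 — it is quoted from Berger, whose classical argument (reproduced, e.g., in Yang's paper) is variational: one takes $e_1,e_2$ spanning a minimizing plane, differentiates $K$ along curves in the Grassmannian to get the first‑order conditions $R_{1213}=R_{1214}=R_{1223}=R_{1224}=0$, uses the Einstein duality $K(\pi)=K(\pi^{\perp})$ to propagate these to the remaining components in (3), and extracts the inequalities in (4) from second‑order (and mixed first‑order) conditions at the extremal planes. Your route is genuinely different and, given that the paper has already set up the splitting $\mathcal R=A\oplus C$ with $B\equiv 0$, arguably cleaner: the identification of condition (3) with simultaneous diagonality of $A$ and $C$ in the induced bases $\{\varphi_a\},\{\psi_a\}$, the surjectivity of $SO(4)\to SO(\Lambda^2_+)\times SO(\Lambda^2_-)$ (a double cover) to realize independent diagonalization and independent sorting of the two spectra by a single positively oriented frame, the characterization of simple unit $2$-vectors as those with $\abs{\omega_+}^2=\abs{\omega_-}^2=\tfrac12$ (via $\omega\wedge\omega=0$) to get (1) and (2), and the observation that (4) is nothing but the nonnegativity of the eigenvalue gaps $a_{j}-a_{i}\ge 0$, $c_{j}-c_{i}\ge 0$ combined via $\abs{x-y}\le x+y$. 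What your approach buys is that (4) stops looking like a second‑variation estimate and becomes a transparent algebraic consequence of the ordering; what Berger's approach buys is independence from the Einstein-specific block decomposition (parts of it work without $B=0$). Two small remarks: the first Bianchi identity is not actually needed for (4) — only the dictionary $a_i-c_i=4R_{\cdot}$ and the gap inequalities — and you should state explicitly (as you essentially do) that any pair of permutations of the two eigenbases lifts to $SO(3)\times SO(3)$ after negating an eigenvector if necessary, so the independent sorting costs nothing in orientation.
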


Throughout this paper, we always choose the above basis $\{e_i\}$ of $T_oM$ to get some estimates. A simple fact is the following observation of curvature operator matrix.
For the above basis $\{e_i\}$ of $T_oM$, we obtain the dual coframe $\{\theta_i\}$, and then a basis  $\{\varphi_i\}$ of $\Lambda^2_+$ ,  a basis  $\{\psi_i\}$ of $\Lambda^2_-$.

A direct computation shows that
$
 A=   \left (
       \begin{array}{lll}
       a_1 \ &0     \ &0    \\[1mm]
       0     \ &a_2 \ &0    \\[1mm]
       0     \ &0     \ &a_3\\[1mm]
       \end{array}
    \right),
$
and
$
 C=   \left (
       \begin{array}{lll}
       c_1 \ &0     \ &0    \\[1mm]
       0     \ &c_2 \ &0    \\[1mm]
       0     \ &0     \ &c_3\\[1mm]
       \end{array}
    \right),
$
where
$a_1 = 2(K_{12}+R_{1234})
\le a_2 = 2(K_{13}+R_{1342})
\le a_3 = 2(K_{14}+R_{1423})$
and
$c_1 = 2(K_{12}-R_{1234})
\le c_2 = 2(K_{13}-R_{1342})
\le c_3 = 2(K_{14}-R_{1423})$.

Define a quadratic function of curvature operator as follow
$$I = (c_2-c_1)c_3+(c_3-c_1)c_2+(a_2-a_1)a_3+(a_3-a_1)a_2.$$

Now we can pose our key lemma in this section.

\begin{lemma} 
Let $(M, g)$ be a complete oriented four-dimensional Einstein manifold  with $Ric=1$, and with the sectional curvature bounded from above by $K\le \frac{\sqrt{3}}2$.
Fixed a point $o\in M$, if at this point the least sectional curvature $K_{12} \le -\epsilon< 0$ for some positive constant $\epsilon$. Then the following hold.

(1). If $a_2\ge 0$ and $c_2\ge 0$, then 
$$ I \ge \frac{16}3 \epsilon .$$

(2). If $a_2<0$ or $c_2<0$, then $$I >\frac 14 \epsilon .$$

\end{lemma}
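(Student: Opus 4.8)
The plan is to convert the geometric hypotheses into algebraic constraints on the eigenvalues $a_i,c_i$ and then reduce both parts to constrained quadratic estimates. First I record the normalization identities: since $Ric=1$ and the first Bianchi identity gives $R_{1234}+R_{1342}+R_{1423}=0$, summing the defining relations yields $a_1+a_2+a_3=c_1+c_2+c_3=2(K_{12}+K_{13}+K_{14})=2$. By Berger's normalization $K_{12}=\min K$ and $K_{14}=\max K$, and since $a_1+c_1=4K_{12}$, $a_3+c_3=4K_{14}$, the hypotheses become the two scalar inequalities $a_1+c_1\le-4\epsilon$ and $a_3+c_3\le 2\sqrt3$. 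I would also note that the Berger inequalities (4) are automatically implied by the orderings $a_1\le a_2\le a_3$, $c_1\le c_2\le c_3$, so no further constraints enter. A useful preliminary remark is that $a_2<0$ forces $a_3=2-a_1-a_2>2$, and likewise for $c$; hence $a_2<0$ \emph{and} $c_2<0$ would give $a_3+c_3>4>2\sqrt3$, a contradiction, so in part (2) exactly one of $a_2,c_2$ is negative.

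For part (1) the two blocks can be estimated independently. Writing $I_A:=(a_2-a_1)a_3+(a_3-a_1)a_2=2a_2a_3-a_1(2-a_1)$, one has the algebraic identity $I_A+2a_1=2a_2a_3+a_1^2$, which is nonnegative as soon as $a_2\ge0$; thus $I_A\ge-2a_1$, and symmetrically $I_C\ge-2c_1$, giving $I\ge-2(a_1+c_1)\ge 8\epsilon\ge\frac{16}{3}\epsilon$. (The stated constant $\frac{16}{3}$ is therefore comfortably met.)

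Part (2) is the crux, and it is exactly where the bound $\frac{\sqrt3}{2}$ is needed. By the symmetry $A\leftrightarrow C$ I assume $a_2<0\le c_2$. Now $I_A$ can be negative (its piece $(a_3-a_1)a_2<0$), so the estimate must come from $I_C$ being correspondingly large, and the coupling is produced by the upper curvature bound: $a_2<0$ makes $a_3$ large, $a_3+c_3\le2\sqrt3$ forces $c_3$ small, and together with $c_1+c_2+c_3=2$ and $a_1+c_1\le-4\epsilon$ this pins down the $c$-spectrum. Quantitatively, eliminating $c_1$ between $c_1\le-4\epsilon-a_1$ and $c_1\ge 2-2c_3\ge 2-2(2\sqrt3-a_3)$ yields the feasibility inequality $a_1+2a_2\ge 6-4\sqrt3+4\epsilon$, which controls how negative $a_2$ can be. I would then minimize $I=I_A+I_C$ over the feasible region; reducing to the two active constraints $a_1+c_1=-4\epsilon$ and $a_3+c_3=2\sqrt3$ leaves a low-dimensional optimization whose extremal configuration has $a_1=a_2$ and $c_2=c_3$. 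At $\epsilon=0$ this configuration gives $a_1=a_2=2-\frac43\sqrt3$ and a direct computation yields $I_A=-(24-\frac{40}{3}\sqrt3)$ and $I_C=24-\frac{40}{3}\sqrt3$, so $I=0$; this confirms that $\frac{\sqrt3}{2}$ is the sharp threshold, and expanding in $\epsilon>0$ produces the strict positivity $I>\frac14\epsilon$.

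The main obstacle is precisely this coupled minimization in part (2): unlike part (1), the two blocks cannot be estimated separately, and one must convert the largeness of $a_3$ into a lower bound for $I_C$ through $a_3+c_3\le2\sqrt3$. The cleanest bookkeeping is to pass to the variables $s_i=a_i+c_i=4K_{1,i+1}$ and $d_i=a_i-c_i$, in which $I=s_2s_3-2s_1+\frac12 s_1^2+\frac12 d_1^2+d_2d_3$ with $d_1+d_2+d_3=0$, while the orderings read $|d_i-d_j|\le s_j-s_i$ (these are again Berger's inequalities (4)). Minimizing first over the $d_i$, a small quadratic program, and then over the $s_i$ subject to $s_1\le-4\epsilon$ and $s_3\le 2\sqrt3$ isolates the extremal configuration above and produces the constant. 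The one genuinely delicate point is verifying that the clean bound $\frac14\epsilon$ holds uniformly over the whole feasible region, rather than only along the extremal family.
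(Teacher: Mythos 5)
Your part (1) is complete and correct, and in fact gives a better constant than the paper's own argument: the identity $I_A+2a_1=2a_2a_3+a_1^2$, which is nonnegative once $a_2\ge0$ (hence $a_3\ge a_2\ge 0$), together with its $C$-analogue yields $I\ge-2(a_1+c_1)=-8K_{12}\ge 8\epsilon>\tfrac{16}{3}\epsilon$. Your preliminary reductions are also sound: Berger's inequalities (4) are indeed equivalent to the orderings $a_1\le a_2\le a_3$, $c_1\le c_2\le c_3$; the hypotheses do translate into $a_1+c_1\le-4\epsilon$ and $a_3+c_3\le2\sqrt3$; $a_2<0$ and $c_2<0$ cannot both hold since each would force the corresponding top eigenvalue above $2$; and the extremal configuration you exhibit ($a_1=a_2=2-\tfrac43\sqrt3$, $c_2=c_3$, both constraints active, $I=0$ at $\epsilon=0$) is consistent with a direct check and correctly explains why $\tfrac{\sqrt3}{2}$ is the natural threshold.

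Part (2), however, which carries essentially all the content of the lemma, is not proved. You reduce it to minimizing $I$ over a relaxed feasible set and then (i) assert without argument that the minimizer has both constraints active and the special form $a_1=a_2$, $c_2=c_3$; (ii) evaluate $I$ only at that single candidate; and (iii) claim that expanding in $\epsilon$ gives $I>\tfrac14\epsilon$ uniformly --- a step you yourself flag as the genuinely delicate point. The missing work is a real multi-case optimization: one must separately treat the regimes in which the binding constraint on $-a_2$ is $-a_1$, the feasibility inequality $a_1+2a_2\ge 6-4\sqrt3+4\epsilon$, or the ordering $c_1\le c_2$, and then extract the linear-in-$\epsilon$ gain from the monotonicity of $I_C$ in $c_1$. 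Until that is done the conclusion $I>\tfrac14\epsilon$ is only a conjecture supported by one boundary computation. The paper proves (2) quite differently: it uses the Bianchi identity to write $\tfrac18 I=(K_{13}-K_{12})K_{14}+(K_{14}-K_{12})K_{13}+R_{1234}^2+2R_{1342}R_{1423}$ and bounds the quadratic remainder from below via Berger's inequalities in three cases, split according to the size of $K_{14}-K_{13}$ relative to $K_{13}-K_{12}$ and the sign of $R_{1234}$. Your optimization framework appears viable and would likely yield a sharper constant than $\tfrac14\epsilon$, but as written it is a plan rather than a proof.
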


\begin{proof} Choose the basis $\{e_i\}$ of Proposition 2.1. Note that $K_{12}+K_{13}=1-K_{14}\ge 1-\frac{\sqrt{3}}2>0$, the manifold has two-positive sectional curvature.

(1). Since $a_2 \ge 0$ and $ c_2\ge 0$, we have $I\ge  (c_2-c_1)c_3+(a_2-a_1)a_3$.

Now $Ric=1$, so $a_1+a_2+a_3=c_1+c_2+c_3=\frac R2=2$. 
And then $a_3\ge \frac 13 (a_1+a_2+a_3)= \frac 23$. Similarly, $c_3\ge \frac23$. So we can obtain
\begin{align*}
I\ge& \frac23 [(a_2+c_2) - (a_1+c_1)]  \\
=&\frac83(K_{13}-K_{12})=\frac83(K_{12}+K_{13}-2K_{12})  \\
>& \frac{16}3 \epsilon.
\end{align*}

(2).  Without lose of generality, we can assume $a_2<0$. By direct computation, we have
\begin{align*}
\frac 14 I =& [ ( K_{13} - R_{1342} )  - ( K_{12} - R_{1234} )  ]   ( K_{14} - R_{1423} )     \\
                 & +  [ ( K_{13} + R_{1342} )  - ( K_{12} + R_{1234} )  ]   ( K_{14} + R_{1423} )     \\
                 & + [ ( K_{14} - R_{1423} )  - ( K_{12} - R_{1234} )  ]   ( K_{13} - R_{1342} )     \\
                 & +  [ ( K_{14} + R_{1423} )  - ( K_{12} + R_{1234} )  ]   ( K_{13} + R_{1342} )     \\
               =& 2(K_{13}-K_{12}) K_{14} + 2 (R_{1342} - R_{1234})R_{1423}  \\
                 & + 2(K_{14}-K_{12}) K_{13} + 2 (R_{1423} - R_{1234})R_{1342}.
\end{align*}

Hence by the first Bianchi identity, 
$$\frac 18 I = (K_{13} -K_{12})K_{14} + (K_{14}-K_{12}) K_{13}  + R_{1234}^2 +  2 R_{1342} R_{1423} . $$

The condition of two-positive sectional curvature implies that $K_{13}>-K_{12}\ge\epsilon>0$. It is easy to see that $K_{14}\ge \frac{K_{14}+K_{13}}2>\frac{K_{14}+K_{13}+K_{12}}2=\frac 12$. Denote by $x=-R_{1234}$, $y=-R_{1342}$. Then $R_{1423}=x+y$, and hence
$$R_{1234}^2 +  2 R_{1342} R_{1423} = x^2-2y(x+y).$$

In the following, we divide the argument into three cases.

{\bf Case 1:  $K_{14}-K_{13}\le \frac72(K_{13}-K_{12})$ . }

In this case, $K_{14}-K_{13}<7K_{13}$, and then $8K_{13}>K_{14}$.

Note that
\begin{align*}
R_{1234}^2 +  2 R_{1342} R_{1423}
=& \frac 13 [ 2(x-y)^2 + 2(x-y)(x+2y) - (x+2y)^2 ]\\
\ge&  -\frac 12(x+2y)^2 .\\
\end{align*}

Now since $a_2<0$, we obtain $a_1<0$, and hence $$x>K_{12},\ y>K_{13}>0.$$ 
This implies that $ x+y>K_{12}+K_{13}  >0$, and $x+2y>0$. Furthermore, by Proposition 2.1, we have
$$ x+2y = R_{1423}-R_{1342} \le K_{14}-K_{13},$$

so we have  
\begin{align*}
\frac 18 I  
\ge& (K_{13} -K_{12})K_{14} + (K_{14}-K_{12}) K_{13}  -\frac 12 (K_{14}-K_{13})^2 \\
=&  (K_{13} -K_{12})(K_{14}+K_{13})+  (K_{14}-K_{13}) K_{13}  -\frac 12 (K_{14}-K_{13})^2 \\
>& (K_{13} -K_{12})(K_{14}+K_{13}) - (\frac 12-\frac 17) (K_{14}-K_{13})^2 \\
>& (K_{13} -K_{12})(K_{14}+K_{13}) - \frac 54(K_{13} -K_{12}) (K_{14}-K_{13}) \\
=& \frac 14(K_{13} -K_{12})(9K_{13}-K_{14})\\
>& \frac 14 \cdot 2(-K_{12})\cdot \frac 18K_{14}  >   \frac 1{32} \epsilon.
\end{align*}

{\bf Case 2:  $K_{14}-K_{13}> \frac72(K_{13}-K_{12})$, and $x\le 0$ . }

By Proposition 2.1, we have
$$ |y-x|\ = |R_{1342}-R_{1234}| \le K_{13}-K_{12}.$$

And then
\begin{align*}
R_{1234}^2 +  2 R_{1342} R_{1423}
=& 3x^2 -6xy- 2(x-y)^2 \\
\ge&  -2(K_{13}-K_{12})^2 .\\
\end{align*}

so we have  
\begin{align*}
\frac 18 I  
\ge& (K_{13} -K_{12})K_{14} + (K_{14}-K_{12}) K_{13}  -2 (K_{13}-K_{12})^2 \\
\ge& (K_{13} -K_{12})(K_{14}+K_{13}) + (K_{14}-K_{13}) K_{13}  -2 (K_{13}-K_{12})^2 \\
>& (K_{13}-K_{12}) [K_{14} + \frac 12K_{13} + 2(K_{13}+K_{12})] \\
>& 2(-K_{12}) K_{14} >\epsilon.
\end{align*}

{\bf Case 3:  $K_{14}-K_{13}> \frac72(K_{13}-K_{12})$, and $x> 0$ . }

Similarly, we have $x+2y  \le K_{14}-K_{13}$,  $y>0$, and
\begin{align*}
R_{1234}^2 +  2 R_{1342} R_{1423}
=& \frac 13 [ 2(x-y)^2 + 2(x-y)(x+2y) - (x+2y)^2 ]\\
\ge& \frac 13 [ 2(x-y)^2 + 2(x-y)(K_{14}-K_{13}) - (K_{14}-K_{13})^2 ].\\
\end{align*}

Furthermore, by $ |y-x| \le K_{13}-K_{12}$, and
$$x-y >  -\frac 12(x+2y) \ge -\frac 12 (K_{14}-K_{13}) ,$$
so we have
$$R_{1234}^2 +  2 R_{1342} R_{1423}
\ge \frac 13 [ 2(K_{13}-K_{12})^2 - 2(K_{13}-K_{12})(K_{14}-K_{13}) - (K_{14}-K_{13})^2 ].  $$

Denote by $m=K_{12}$, $M=K_{14}$, then $z=K_{13}=1-(m+M)$, and
\begin{align*}
\frac 38 I 
=& 3M(z-m) +3z(M-m) + [ 2(z-m)^2 -2 (z-m)(M-z) -(M-z)^2 ]\\
=& 3M(1-M-2m) + 3(1-M-m)(M-m)  + 2( 1-M-2m  )^2\\
  & -2 (1-M-2m)(2M+m-1) - (2M+m-1)^2  \\
=& -4M^2 + 3 + 8mM -15m+14m^2\\
=& -4M^2 + 3 + (-m)(15-8M )+14m^2\\
>& 7\epsilon,
\end{align*}
the last inequality holds because $-4M^2+3\ge0$.

Combining the above argument, we complete the proof of Lemma 2.2.

\end{proof}

\section{Einstein four-manifolds with two-positive sectional curvature}

Let $(M, g)$ be a closed Riemannian manifold, and $g_{ij}(t)$ is the unique short time solution  of Ricci flow 
$$\frac{\partial}{\partial t}g_{ij}(t)=-2R_{ij}(t).$$ 
In terms of moving frames \cite{Ha86},  the curvature operator $M_{\alpha\beta}$ evolves by
$$\frac{\partial}{\partial t}M_{\alpha\beta}=\triangle M_{\alpha\beta}+M_{\alpha\beta}^2+M_{\alpha\beta}^{\#} ,$$
where $M_{\alpha\beta}^{\#}$ is the Lie algebra adjoint of $M_{\alpha\beta}$. And the ODE corresponding to the above equations is 
$$\frac{d}{d t}M_{\alpha\beta}=M_{\alpha\beta}^2+M_{\alpha\beta}^{\#} .\eqno(\rm{ODE})$$

Suppose $(M, g)$ is a closed oriented four-dimensional normalized Einstein manifold with positive scalar curvature, it is easy to see that the unique solution of Ricci flow is  a self-similar solution given by 
$$g(t)=(1-2 t)g,\quad t\in (-\infty, \frac12).$$

Follow by the discussion in the last section, we have a good block decomposition of the curvature operator matrix
$$
 M_{\alpha\beta}=   \left (
       \begin{array}{lll}
       A \ & 0\\[1mm]
       0 \ & C \\[1mm]
       \end{array}
    \right),
$$
where $A$ and $C$ are diagonalized by the eigenvalues $\{a_i\}$ and $\{c_i\}$, respectively.
Furthermore, the ODE corresponding to the Ricci flow of $A$ and $C$ becomes
$$\frac{d}{dt}A=A^2+2A^{\sharp}, \qquad \frac{d}{dt}C=C^2+2C^{\sharp}.$$
So the ODE system of eigenvalues are given by
$$
  \left \{
       \begin{array}{lll}
       \frac{d}{dt}a_1 &=a_1^2 + 2a_2a_3, \\[2mm]
       \frac{d}{dt}a_2 &=a_2^2 + 2a_1a_3, \\[2mm]
       \frac{d}{dt}a_3 &=a_3^2 + 2a_1a_2, \\[2mm]
       \end{array}
    \right\{
       \begin{array}{lll}
       \frac{d}{dt}c_1 &=c_1^2 + 2c_2c_3, \\[2mm]
       \frac{d}{dt}c_2 &=c_2^2 + 2c_1c_3, \\[2mm]
       \frac{d}{dt}c_3 &=c_3^2 + 2c_1c_2, \\[2mm]
       \end{array}.
$$

Now we can prove Theorem 1.1.

\begin{proof}{\bf of Theorem 1.1}
If $M$ is not orientable, we can lift the Einstein metric onto an oriented 2-cover of $M$. So we always 
assume $(M, g)$ is an oriented four-dimensional Einstein manifolds with $Ric=1$, and $K \le \frac{\sqrt{3}}2$. 

Let $\kappa=\min \frac KR$, where $R$ is the scalar curvature. We only need to prove $\kappa\ge0$. If not, then there exist some constant $\epsilon>0$, such that $K_{12}=\min{K(\pi)}<-\epsilon$. Now for the self-similar solution of Ricci flow $g(t)$, we have the following claim.
\begin{claim}
There exist a constant $\delta>0$, such that $$a_1+c_1\ge (\kappa+\delta t)R$$ is preserved under the Ricci flow for all $t\ge 0$.
\end{claim}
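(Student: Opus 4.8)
The plan is to prove the claim with Hamilton's advanced maximum principle for systems \cite{Ha86}, reducing the preservation of the inequality along the Ricci flow to the preservation of the corresponding time-dependent region under the ODE for the eigenvalues $(a_i,c_i)$. The function $a_1+c_1$, the sum of the least eigenvalues of the two blocks $A$ and $C$, is a concave function of the curvature operator (a minimum of linear eigenvalue functionals), while $R=2(a_1+a_2+a_3)$ is linear; hence the region $\{a_1+c_1\ge(\kappa+\delta t)R\}$ is fiberwise closed, convex and invariant, so the maximum principle applies. The initial inclusion at $t=0$ is immediate from the definition of $\kappa$. It then suffices to check that the ODE vector field points into the region along its moving boundary, i.e.\ that $F:=a_1+c_1-(\kappa+\delta t)R$ does not decrease where $F=0$.

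The heart of the argument is a computation on that boundary. Using the evolution equations $\frac{d}{dt}a_1=a_1^2+2a_2a_3$, $\frac{d}{dt}c_1=c_1^2+2c_2c_3$, the trace identities $a_1+a_2+a_3=c_1+c_2+c_3=\frac R2$, and the definition of $I$, I would establish the key identity
\[
\frac{d}{dt}(a_1+c_1)=I+\tfrac R2\,(a_1+c_1).
\]
Since $\frac{d}{dt}(a_1+a_2+a_3)=(a_1+a_2+a_3)^2$, the scalar curvature satisfies $\frac{dR}{dt}=\frac{R^2}{2}$, so $\frac{d}{dt}\big[(\kappa+\delta t)R\big]=\delta R+(\kappa+\delta t)\frac{R^2}{2}$. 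Substituting $(\kappa+\delta t)=\frac{a_1+c_1}{R}$, which holds on the boundary, the two copies of $\frac R2(a_1+c_1)$ cancel and leave
\[
\frac{dF}{dt}\Big|_{F=0}=I-\delta R.
\]
Thus the whole claim is reduced to the single algebraic inequality $I\ge\delta R$ on the boundary configuration.

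This last inequality is exactly what Lemma 2.2 supplies. On the boundary the least sectional curvature equals $K_{12}=\frac14(a_1+c_1)=\frac14(\kappa+\delta t)R$, which is strictly negative once $\delta$ is chosen small enough that $\kappa+\delta t<0$ throughout $[0,\frac12)$ (for instance $\delta<|\kappa|$). Because both the hypotheses ($Ric$ normalized, $K\le\frac{\sqrt3}{2}$) and the conclusion of Lemma 2.2 are scale invariant, they transfer to the rescaled curvature at each time, and Lemma 2.2 yields, in scale-invariant form, $I\ge c\,|K_{12}|\,R$ for an absolute constant $c>0$, hence $I\ge c'R^2$ with $c'>0$ depending only on $|\kappa|$. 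Since $R(t)=\frac{4}{1-2t}\ge4$ on $[0,\frac12)$, we get $I\ge c'R^2\ge4c'R$, so $I\ge\delta R$ for any $\delta\le 4c'$. The main obstacle is precisely this uniform choice of $\delta$: the scalar curvature $R(t)$ blows up as $t\to\frac12$, so no fixed pointwise bound on $I$ will do. It is overcome by working scale-invariantly and using that the solution is self-similar, so that $(a_1+c_1)/R$, $K_{12}/R$ and $I/R^2$ are constant in $t$ and the scale-invariant form of Lemma 2.2 gives one lower bound $I/R^2\ge c'$ valid for all $t\ge0$. A minor technical point, handled by Hamilton's support-function formulation, is that $a_1$ and $c_1$ are not smooth where the least eigenvalues have higher multiplicity; there one reads $\frac{d}{dt}(a_1+c_1)$ as a forward lower bound rather than a genuine derivative.
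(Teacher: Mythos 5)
Your proposal is correct and follows essentially the same route as the paper: the advanced maximum principle reduces the claim to the ODE on the boundary $a_1+c_1=(\kappa+\delta t)R$, where the identity $\frac{d}{dt}(a_1+c_1)=I+\frac R2(a_1+c_1)$ and $\frac{dR}{dt}=\frac{R^2}2$ leave exactly the inequality $I\ge\delta R$, which Lemma 2.2 supplies. Your additional care in phrasing Lemma 2.2 scale-invariantly (so that the lower bound $I\ge\delta R$ persists as $R(t)\to\infty$ along the self-similar solution) is a point the paper passes over silently, and it is handled correctly.
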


\emph{proof of Claim 3.1.} Consider the set $\Omega$ of matrices defined by the above inequality. It is easy to see that $\Omega$ is closed, convex and $O(n)$-invariant. By using the advanced maximum principle, we only need to show the set $\Omega$ is preserved by the (ODE) system. Indeed, we only need to look at points on the boundary of the set.

Suppose at some point $(o, t)$, $a_1+c_1= (\kappa+\delta t)R$. Then
\begin{align*}
\frac d{dt}(a_1+c_1)
=&a_1^2+2a_2a_3 + c_1^2+2c_2c_3 \\
=& a_1(a_1+a_2+a_3)+c_1(c_1+c_2+c_3)+I \\
=& (a_1+c_1)\cdot \frac R2 + I \\
=& (\kappa+\delta t) \frac{R^2}2 + I \\
\end{align*}

On the other hand, since $g(t)$ is just a scaling of Einstein matric $g$, $R_{ij}(t)=\lambda(t) g_{ij}(t)=\frac {R(t)}4 g_{ij}(t)$,
\begin{align*}
\frac d{dt}  \Big[ (\kappa+\delta t)R \Big] 
=& (\kappa+\delta t) \frac d{dt}R + \delta R \\
=&(\kappa+\delta t) \cdot 2|Ric|^2 + \delta R \\
=&(\kappa+\delta t) \cdot \frac {R^2}2 + \delta R \\
\end{align*}

Furthermore, follow by Lemma 2.2,
$$I > C(\epsilon)R$$
for some positive constant $C=C(\epsilon)$. Take $\delta=C(\epsilon)$, and we complete the proof of Claim 3.1.

By Claim 3.1, we obtain that 
$$a_1+c_1> \kappa R$$
for all small $t>0$, but this is impossible, since $g(t)$ is a scaling of $g$, and then $a_1+c_1= \kappa R$ at some point for all t.

\end{proof}

\section{Rigidity of Einstein four-manifolds with curvature bounded from below}

In this section, we will consider the rigidity of Einstein four-manifolds with curvature bounded from below. Given any constant $s \ge 0$, let $$K_s=\frac{1+\sqrt{2}}3 - \frac{\sqrt{4+2\sqrt{2}}}4+\frac{2-\sqrt{2}}6 s.$$ Then we can get the following key lemma.
\begin{lemma} 
Suppose $(M, g)$ is a complete oriented four-dimensional Einstein manifold  with nonnegative sectional curvature and $Ric=1$. If we further assume that $K_{ik} + sK_{ij} \ge K_s$ for every orthonormal basis $\{e_i\}$ with $K_{ik}\ge K_{ij}$, and the least sectional curvature $K_{12}\le \epsilon_0-\epsilon$, where $\epsilon_0=\frac{2-\sqrt{2}}6$. Then the following hold.

(1). If $a_2\ge 0$ and $c_2\ge 0$, then 
$$ I > \frac83 \epsilon. $$

(2). If $a_2<0$ or $c_2<0$, then $$I >\epsilon .$$

\end{lemma}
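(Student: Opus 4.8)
The plan is to mirror the structure of the proof of Lemma 2.2, since Lemma 4.1 is the analogous lower-bound on the quantity $I$ under a curvature-bounded-below hypothesis rather than a curvature-bounded-above one. I would again fix the Berger basis $\{e_i\}$ of Proposition 2.1, so that $K_{12}=\min K(\pi)$, $K_{14}=\max K(\pi)$, and the off-diagonal blocks vanish. The key algebraic identities carry over verbatim: writing $I=(c_2-c_1)c_3+(c_3-c_1)c_2+(a_2-a_1)a_3+(a_3-a_1)a_2$, the first Bianchi identity gives
\begin{equation*}
\frac 18 I = (K_{13}-K_{12})K_{14} + (K_{14}-K_{12})K_{13} + R_{1234}^2 + 2R_{1342}R_{1423},
\end{equation*}
and with $x=-R_{1234}$, $y=-R_{1342}$, $R_{1423}=x+y$, one has $R_{1234}^2+2R_{1342}R_{1423}=x^2-2y(x+y)$. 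The essential difference from Lemma 2.2 is the source of the effective constraints: there the bound $K\le \tfrac{\sqrt 3}{2}$ forced $K_{14}\le\tfrac{\sqrt3}2$ and $K_{12}+K_{13}$ to be bounded below, whereas here I instead have $K\ge 0$ together with the pinching hypothesis $K_{ik}+sK_{ij}\ge K_s$ and the assumption $K_{12}\le\epsilon_0-\epsilon$.

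For part (1), where $a_2\ge0$ and $c_2\ge0$, I would argue exactly as before: $I\ge (c_2-c_1)c_3+(a_2-a_1)a_3$, and since $Ric=1$ gives $a_1+a_2+a_3=c_1+c_2+c_3=2$, the largest eigenvalues satisfy $a_3,c_3\ge\tfrac23$. This yields $I\ge\tfrac23[(a_2+c_2)-(a_1+c_1)]=\tfrac83(K_{13}-K_{12})$. The pinching hypothesis applied to the basis realizing $K_{ik}=K_{13}$, $K_{ij}=K_{12}$ (or an appropriate orthonormal configuration) should give $K_{13}\ge K_s-sK_{12}$, and combined with $K_{12}\le\epsilon_0-\epsilon$ this should force $K_{13}-K_{12}>\epsilon$, producing $I>\tfrac83\epsilon$. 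The careful bookkeeping here is making sure the right $K_{ik},K_{ij}$ pair is plugged into the hypothesis so that the constant $K_s$ with its specific $s$-dependence exactly converts into a clean lower bound on $K_{13}-K_{12}$; this is where the peculiar value of $K_s$ and of $\epsilon_0=\tfrac{2-\sqrt2}6$ must be used, not merely invoked.

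For part (2), assuming without loss of generality $a_2<0$, I expect the same three-case split driven by the sign of $x$ and by comparing $K_{14}-K_{13}$ with a multiple of $K_{13}-K_{12}$, using the Berger inequalities $|R_{1342}-R_{1234}|\le K_{13}-K_{12}$ and $R_{1423}-R_{1342}=x+2y\le K_{14}-K_{13}$ to bound the quadratic $x^2-2y(x+y)$ from below in each regime. In the final case one introduces $m=K_{12}$, $M=K_{14}$, $z=K_{13}=1-(m+M)$ and reduces $I$ to a polynomial in $m$ and $M$; the terminal estimate will hinge on a sign fact analogous to $-4M^2+3\ge0$, except that here the relevant nonnegativity must come from the lower curvature bound $K\ge0$ (so $M\ge z\ge 0$, $m\ge0$ is false since $m<\epsilon_0$, but $m+M\le1$ etc.) and from the pinching constant $K_s$.

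The main obstacle I anticipate is precisely this last calibration in part (2): unlike Lemma 2.2, where the clean inequality $-4M^2+3\ge0$ fell out of $M\le\tfrac{\sqrt3}2$, here the analogous quadratic-in-$M$ (or in $m$) must be shown nonnegative using the combination of $K\ge0$, $m+M\le 1$, and the pinching bound $K_s$, and it is the specific algebraic value $K_s=\tfrac{1+\sqrt2}3-\tfrac{\sqrt{4+2\sqrt2}}4+\tfrac{2-\sqrt2}6 s$ that is engineered to make the discriminant work out so that $I>\epsilon$ holds with the stated constant. Verifying that this engineered constant indeed closes every case — and that no case produces a weaker bound than $\epsilon$ — is the delicate part; I would expect the threshold value $\epsilon_0$ and the nested radical in $K_s$ to be exactly the boundary at which the worst case (likely the analogue of Case 3) degenerates, so the inequalities must be tracked to leading order in $\epsilon$ rather than discarded.
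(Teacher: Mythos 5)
Your part (1) is correct and follows the paper's proof essentially verbatim: $I\ge\tfrac83(K_{13}-K_{12})$ via $a_3,c_3\ge\tfrac23$, and then $K_{13}-K_{12}=(K_{13}+sK_{12})-(1+s)K_{12}\ge K_s-(1+s)\epsilon_0+(1+s)\epsilon=\tfrac{\sqrt2}2-\tfrac{\sqrt{4+2\sqrt2}}4+(1+s)\epsilon>\epsilon$, exactly the bookkeeping you anticipated.

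Part (2), however, is only a plan, and it defers precisely the step in which the lemma's content lies. Three concrete points. First, the three-case split you expect from Lemma 2.2 is not needed and is not what the paper does: nonnegative sectional curvature forces $x=-R_{1234}>K_{12}\ge0$ and $y>K_{13}>0$ outright, so only the analogue of Case 3 survives, and the single estimate $R_{1234}^2+2R_{1342}R_{1423}\ge\tfrac13[2(K_{13}-K_{12})^2-2(K_{13}-K_{12})(K_{14}-K_{13})-(K_{14}-K_{13})^2]$ applies unconditionally. Second, your proposed change of variables $z=K_{13}=1-(m+M)$ with a sign fact analogous to $-4M^2+3\ge0$ is not the right calibration here; the paper instead sets $z=K_{13}+sK_{12}$ (the pinched quantity itself) and writes $\tfrac38I=-4z^2+8(1+sm-2m)z+[-1+(1-8s)m+2(1+8s-2s^2)m^2]$, so that the hypothesis $z\ge K_s$ enters directly: at $m=\epsilon_0$ the constant $K_s$ is exactly the smaller root of this quadratic in $z$, and together with the easy upper bound $z<\tfrac{1+\sqrt2}3+\tfrac{2-\sqrt2}6s+\tfrac{\sqrt{4+2\sqrt2}}4$ this gives $\tfrac38I\big|_{m=\epsilon_0}\ge0$. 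Third, the mechanism that converts the slack in $K_{12}\le\epsilon_0-\epsilon$ into the quantitative conclusion $I>\epsilon$ — the "tracking to leading order in $\epsilon$" you correctly say must be done — is a monotonicity argument: one checks $\tfrac{\partial}{\partial m}\bigl(\tfrac38I\bigr)=-(16z-1-4m)-8s(1+ms-z-4m)<-1$, using $16z-1-4m>1$ and $1+ms-z-4m=K_{14}-3m\ge\tfrac13-3\epsilon_0>0$, whence $\tfrac38I\ge\tfrac38I\big|_{m=\epsilon_0}+\epsilon\ge\epsilon$. Without identifying the correct variable $z$, the root property of $K_s$, and this derivative bound, the argument does not close; your proposal names the difficulty but does not supply the idea that resolves it.
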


\begin{proof}
Choose the basis $\{e_i\}$ of Proposition 2.1. It is easy to see that
$$ K_{13}+sK_{12} < 1+\frac{2-\sqrt{2}}6 s < \frac{1+\sqrt{2}}3 + \frac{\sqrt{4+2\sqrt{2}}}4+\frac{2-\sqrt{2}}6 s.$$

(1). Follow the same argument as Lemma 2.2,  
\begin{align*}
I\ge&  (c_2-c_1)c_3+(a_2-a_1)a_3 \\
\ge& \frac23 [(a_2+c_2) - (a_1+c_1)]  \\
=&\frac83(K_{13}-K_{12})=\frac83 [ K_{13} + sK_{12} - (1+s)K_{12} ]  \\
\ge& \frac83 [K_s - (1+s)\epsilon_0+(1+s)\epsilon] \\
\ge& \frac83 [\frac{\sqrt{2}}2 - \frac{\sqrt{4+2\sqrt{2}}}4+(1+s)\epsilon] > \frac83 \epsilon.
\end{align*}

(2).  Similarly, we can assume $a_2<0$. Then $a_1<0$. 
Let $x=-R_{1234}$, $y=-R_{1342}$. We have $$x>K_{12}\ge 0,\ y>K_{13}>0.$$ 
Furthermore,
$$ 0<x+2y = R_{1423}-R_{1342} \le K_{14}-K_{13},$$
and
$$ |y-x|\ = |R_{1342}-R_{1234}| \le K_{13}-K_{12}.$$

Since
$$\frac 18 I = (K_{13} -K_{12})K_{14} + (K_{14}-K_{12}) K_{13}  + R_{1234}^2 +  2 R_{1342} R_{1423} ,$$
and
\begin{align*}
R_{1234}^2 +  2 R_{1342} R_{1423}
=& x^2-2y(x+y)  \\
=& \frac 13 [ 2(x-y)^2 + 2(x-y)(x+2y) - (x+2y)^2 ]\\
\ge& \frac 13 [ 2(x-y)^2 + 2(x-y)(K_{14}-K_{13}) - (K_{14}-K_{13})^2 ]\\
\end{align*}

Note that
$$x-y\ge -\frac 12(x+2y) \ge -\frac 12 (K_{14}-K_{13}) ,$$
so we have
$$R_{1234}^2 +  2 R_{1342} R_{1423}
\ge \frac 13 [ 2(K_{13}-K_{12})^2 - 2(K_{13}-K_{12})(K_{14}-K_{13}) - (K_{14}-K_{13})^2 ].  $$

Denote by $m=K_{12}$, $z=K_{13}+sK_{12}$, then $K_{14}=1-K_{12}-K_{13}$, and
\begin{align*}
\frac 38 I 
=& 3(K_{13} -K_{12})(1-K_{12}-K_{13}) +3 (1-2K_{12}-K_{13}) K_{13}\\
   & + 2(K_{13}-K_{12})^2 - 2(K_{13}-K_{12})(1-K_{12}-2K_{13}) - (1-K_{12}-2K_{13})^2  \\
=& -1+K_{12}+8K_{13}+2K_{12}^2-4K_{13}^2-16K_{12}K_{13}\\
=& -4z^2 + 8(1+sm-2m)z + [-1+(1-8s)m+2(1+8s-2s^2)m^2] \\
\end{align*}

It is easy to see that
\begin{align*}
\frac{\partial}{\partial m}(\frac 38 I)
=& 8z(s-2) +1-8s+4m(1+8s-2s^2)   \\
=&  -(16z-1-4m) -8s(1+ms-z-4m).
\end{align*}

Since $s$ is nonnegative, $z\ge K_s\ge  \frac{1+\sqrt{2}}3 - \frac{\sqrt{4+2\sqrt{2}}}4 \approx 0.151456$, and $m\le \epsilon_0 \approx 0.097631$, we have
$$16z-1-4m>1.$$
On the other hand,
$$1+ms-z-4m = K_{14}-3m\ge \frac 13-3\epsilon_0>0.$$
Hence $$\frac{\partial}{\partial m}(\frac 38 I)<-1.$$

So
$$\frac 38 I \ge \frac 38 I\Big|_{m=\epsilon_0-\epsilon} > \frac 38 I\Big|_{m=\epsilon_0} +\epsilon.$$

While
\begin{align*}
\frac 38 I\Big|_{m=\epsilon_0}=&-4z^2 + 8(1+s\epsilon_0-2\epsilon_0)z  \\
                                                 &+ [-1+(1-8s)\epsilon_0+2(1+8s-2s^2)\epsilon_0^2] \ge0,
\end{align*}
because the facts that $z=K_{13}+sK_{12}< \frac{1+\sqrt{2}}3 +\frac{2-\sqrt{2}}6 s +  \frac{\sqrt{4+2\sqrt{2}}}4$, and
\begin{align*}
z\ge&  1-2\epsilon_0+s\epsilon_0 - \frac 12 \sqrt{3(1-2\epsilon_0)(1-3\epsilon_0)}  \\
            = &  \frac{1+\sqrt{2}}3 +\frac{2-\sqrt{2}}6 s- \frac{\sqrt{4+2\sqrt{2}}}4.
\end{align*}

\end{proof}

Now we can following a similar argument to prove our main theorem 1.2.

\begin{proof}{\bf of Theorem 1.2}
Follow the same argument as Theorem 1.1,  we can 
assume $(M, g)$ is an oriented four-dimensional Einstein manifolds with nonnegative sectional curvature and $Ric=1$. 
In the following, we will divide the argument into two cases.

{\bf Case 1.}   $\min K\ge \epsilon_0$. In this case,  by the theorem of Costa \cite{Co}, the theorem holds.

{\bf Case 2.}  $\min K< \epsilon_0$. But this is impossible. We will argue by contradiction.
Since $M$ is compact, there exist a constant $\epsilon>0$, such that $\min K \le \epsilon_0-\epsilon$.

Let $\kappa=\min \frac KR$, where $R$ is the scalar curvature. We have the following assertion.

\begin{claim} There exist a constant $\delta>0$, such that $$a_1+c_1\ge (\kappa+\delta t)R$$ is preserved under the Ricci flow for all $t\ge 0$.
\end{claim}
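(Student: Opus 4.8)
The plan is to repeat the argument of Claim 3.1 almost verbatim, the only change being that the required curvature lower bound is now supplied by Lemma 4.1 instead of Lemma 2.2. I would let $\Omega(t)$ denote the set of algebraic curvature operators satisfying $a_1+c_1\ge(\kappa+\delta t)R$. Since $a_1+c_1$ is the sum of the smallest eigenvalues of the blocks $A$ and $C$ and $R$ is determined by their traces, this inequality is linear in the eigenvalues and cuts out a closed, convex, $O(n)$-invariant region; hence Hamilton's advanced maximum principle applies, and it suffices to show that $\Omega(t)$ is preserved by the (ODE) system. For that I only have to examine points on the boundary, where $a_1+c_1=(\kappa+\delta t)R$, and check that the vector field of (ODE) does not push them out.

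At such a point I would differentiate along (ODE). Using $\frac{d}{dt}a_1=a_1^2+2a_2a_3$, $\frac{d}{dt}c_1=c_1^2+2c_2c_3$ and the Einstein normalization $a_1+a_2+a_3=c_1+c_2+c_3=\frac R2$, the same rearrangement as in Claim 3.1 gives
$$\frac{d}{dt}(a_1+c_1)=(a_1+c_1)\frac R2+I=(\kappa+\delta t)\frac{R^2}{2}+I.$$
On the other hand, because $g(t)$ is merely a rescaling of an Einstein metric, $\frac{dR}{dt}=2|Ric|^2=\frac{R^2}{2}$, so
$$\frac{d}{dt}\big[(\kappa+\delta t)R\big]=(\kappa+\delta t)\frac{R^2}{2}+\delta R.$$
Subtracting, invariance of $\Omega(t)$ is equivalent to the single pointwise estimate $I\ge\delta R$ holding at each boundary point.

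To obtain this estimate I would invoke Lemma 4.1. In Case 2 of Theorem 1.2 the least sectional curvature obeys $K_{12}\le\epsilon_0-\epsilon$, which is exactly the hypothesis of that lemma; it then yields $I>\frac83\epsilon$ when $a_2,c_2\ge0$ and $I>\epsilon$ otherwise, so in all cases $I>\epsilon>0$. Dividing by the scalar curvature converts this into $I>C(\epsilon)R$ for a positive constant $C(\epsilon)$, and taking $\delta=C(\epsilon)$ finishes the verification. The claim then feeds into the contradiction driving Theorem 1.2, exactly as in the proof of Theorem 1.1: $a_1+c_1>\kappa R$ for small $t>0$, which is impossible for a self-similar scaling of $g$.

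The step I expect to require the most care is ensuring that $I\ge\delta R$ holds uniformly for all $t\ge0$ and not merely at $t=0$, since Lemma 4.1 is stated under the normalization $Ric=1$ while the flowing metric is $g(t)=(1-2t)g$. The resolution is that rescaling $g(t)$ back to unit Einstein constant returns exactly the initial metric $g$, so every hypothesis of Lemma 4.1, namely nonnegative curvature, the pinching $K_{ik}+sK_{ij}\ge K_s$, and $K_{12}\le\epsilon_0-\epsilon$, is automatically satisfied along the whole flow; moreover the Einstein constant $\lambda(t)=\frac{1}{1-2t}$ only increases for $t\ge0$, so the ratio $I/R$ does not decrease and the $\delta$ chosen at $t=0$ continues to work. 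Apart from this scaling bookkeeping the proof is mechanical, the substantive work having already been carried out in Lemma 4.1.
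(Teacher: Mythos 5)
Your proof is correct and follows essentially the same route as the paper: reduce to the ODE system via the advanced maximum principle on the closed, convex, $O(n)$-invariant set $a_1+c_1\ge(\kappa+\delta t)R$, compute $\frac{d}{dt}(a_1+c_1)=(\kappa+\delta t)\frac{R^2}{2}+I$ against $\frac{d}{dt}[(\kappa+\delta t)R]=(\kappa+\delta t)\frac{R^2}{2}+\delta R$, and invoke Lemma 4.1 to get $I>C(\epsilon)R$ so that $\delta=C(\epsilon)$ works. Your closing remark on the scaling of $I/R$ along the self-similar solution $g(t)=(1-2t)g$ is in fact a point the paper leaves implicit, and you resolve it correctly.
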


\emph{proof of Claim 4.2.} Similarly, we only need to show that along the (ODE) system,
$$\frac{d}{dt}(a_1+c_1)\ge \frac{d}{dt}[(\kappa+\delta t)R]$$
at the boundary $a_1+c_1= (\kappa+\delta t)R$. Note that
$$\frac d{dt}(a_1+c_1) = (\kappa+\delta t) \frac{R^2}2 + I, $$
and
$$\frac d{dt}  \Big[ (\kappa+\delta t)R \Big]  =(\kappa+\delta t) \cdot \frac {R^2}2 + \delta R.$$

Then by Lemma 4.1,
$$I > C(\epsilon)R$$
for some positive constant $C=C(\epsilon)$. Take $\delta=C(\epsilon)$, and we get our assertion. 
\break

But Claim 4.2 develops a contradiction that 
$$a_1+c_1> \kappa R$$
for all small $t>0$. And we complete the proof of Theorem 1.2.

\end{proof}

\section{Rigidity of Einstein four-manifolds with two-positive sectional curvature}

In this section, we will consider the rigidity of Einstein four-manifolds with two-positive sectional curvature. 

Take $s=1$ in Lemma 4.1. Then the assumption of curvature becomes $K_{ik}+K_{ij}\ge K_1$, that means the sectional curvature satisfies
$$K\le 1-K_1=\frac {2-\sqrt{2}}6+\frac{\sqrt{4+2\sqrt{2}}}4=M_2.$$

Now we can prove our main theorem 1.5.

\begin{proof}{\bf of Theorem 1.5}
Similarly, we can assume $(M, g)$ is an oriented four-dimensional Einstein manifolds with $Ric=1$. And the sectional curvature has a upper bound $M_2$ means
 $$K_{ik}+K_{ij}\ge K_1.$$
On the other hand, $M_2<M_1$, so Theorem 1.1 implies that the sectional curvature must be nonnegative. 

Hence $\min K\ge \epsilon_0$. (Otherwise, we can follow the same argument as Theorem 1.2, by using Lemma 4.1 to construct a pinched set invariant under the Ricci flow, and develop a contradiction.)  So we can apply Costa's result \cite{Co} to get our rigidity theorem.

\end{proof}

\bibliographystyle{amsplain}

\end{document}